\theoremstyle{plain}
\newtheorem{thm}{Theorem}[section]
\newtheorem{lem}[thm]{Lemma}
\theoremstyle{definition}
\newtheorem{ex}[thm]{Example}
\DeclareMathOperator{\CH}{CH}  
\DeclareMathOperator{\K0}{K_0}   
\DeclareMathOperator{\ind}{ind} 
\newcommand{\XX}{{}_\xi X}  
\newcommand{\ZZ}{\mathbb{Z}}   
\newcommand{\cc}{\mathfrak{c}}  
\newcommand{\LL}{\mathcal{L}}   
\newcommand{\ia}{{\mathrm{i}_a}}  
\newcommand{\ib}{{\mathrm{i}_b}}  
\newcommand{\ic}{{\mathrm{i}_c}}  
\newcommand{\istar}{{\mathrm{i}_*}} 
\DeclareMathOperator{\res}{res}   
\DeclareMathOperator{\igam}{\gamma^{{\it i/i}+1}K_0(X)}  
\DeclareMathOperator{\igamG}{\gamma^{\it i}\mathfrak{G}_s}  
\DeclareMathOperator{\igtwist}{\gamma^{\it i}_\xi\mathfrak{G}_s}  
\title{Twisted gamma filtration and algebras with orthogonal involution}
\author{Caroline Junkins}
\begin{document}


\begin{abstract}
For the Grothendieck group of a split simple linear algebraic group, the
twisted $\gamma$-filtration provides a useful tool for constructing torsion
elements in $\gamma$-rings of twisted flag varieties. In this paper, we construct a non-trivial torsion element in the $\gamma$-ring of a complete flag variety twisted
by means of a $PGO$-torsor. This generalizes the construction in the
$HSpin$ case previously obtained by Zainoulline.
We use this torsion element to study algebras with orthogonal
involutions.
\end{abstract}

\maketitle

\section*{Introduction}

Let $X$ be a variety of Borel subgroups of a simple linear algebraic group $G$ over a field $k$. Consider the Grothendieck group $\K0(X)$ and the associated graded ring $\gamma^*\K0(X)$ whose degree $i$ components are determined by the Grothendieck $\gamma$-filtration. The ring $\gamma^*\K0(X)$ was introduced by Grothendieck as an approximation of the topological filtration on $\K0$, which in turn is related to the Chow groups $\CH^*(X)$ of algebraic cycles modulo rational equivalence. By the Riemann-Roch theoreom (cf. \cite{SGA6}), there exists a surjection $\gamma^{2/3}\K0(X)\twoheadrightarrow \CH^2(X)$ (cf. \cite[Ex. 15.3.6]{Fu}), and so the torsion in $\gamma^2\K0$ may be viewed as an upper bound for the torsion in $\CH^2$. 

Determining torsion in $\CH^d(X)$ is a non-trivial problem, and only partial results are known. For $d=2,3$ and $G$ stronly inner, we refer to \cite{Pe98} and \cite{GaZ10}. The case of quadrics was considered in \cite{KM} for $d=2,3,4$. The $\gamma$-filtration was used in \cite{Ka98}, and in \cite{Ka96} it was found that $Tors\CH^4$ can in fact be infinitely generated. Results for arbitrary $d$ have been obtained recently in \cite{BZZ} and \cite{Ba12}, by providing upper bounds for the annihilators of $Tors\CH^d$.

The $\gamma$-filtration has also proved useful for studying the motivic J-invariant in \cite{QSZ} and \cite{J11}.
In the present paper we use the twisted $\gamma$-filtration, introduced in \cite{Z12}, to provide a non-trivial torsion element in $\gamma^2\K0(X)$, where $X$ is a complete flag variety twisted by means of a $PGO$-torsor. Namely, we prove the following

\begin{thm}
Let $X$ be the variety of Borel subgroups for a linear algebraic group $G$ of type $D_n$, for $n\geq 4$ even. Suppose that the indicies $2^\ia\leq 2^\ib\leq  2^\ic$ of the Tits algebras of $G$ are all non-trivial, are bounded above by $n$, and that they satisfy the additional condition that $\ia+\ib>\ic\geq 2$. Under these conditions, there exists a non-trivial torsion element in $\gamma^2\K0(X)$ which is nontrivial in the twisted $\gamma$-filtration of $\K0(X)$ and which vanishes over a splitting field of $G$. 
\end{thm}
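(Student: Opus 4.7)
The plan is to adapt the $HSpin$ construction of \cite{Z12} to the $PGO$ case. Write $\mathfrak{G}_s = K_0(X_s)$ for the split form $X_s$ of $X$. The surjective characteristic map $c\colon S^*(\hat T)\twoheadrightarrow \gamma^*\mathfrak{G}_s$ represents classes as polynomials in first Chern classes $\zeta_\lambda = c(\lambda)$, and the twisted $\gamma$-filtration $\igtwist$ of \cite{Z12} describes, in each degree, the image of the restriction $\gamma^i K_0(X)\to \gamma^i \mathfrak{G}_s$. To prove the theorem it therefore suffices to exhibit $\tau\in \gamma^2_\xi\mathfrak{G}_s$ that is torsion in $\gamma^*\mathfrak{G}_s$ and nonzero modulo $\gamma^3_\xi\mathfrak{G}_s$; vanishing of $\tau$ over a splitting field is automatic since $\mathfrak{G}_s$ is itself the base change of $K_0(X)$ to such a field.

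For $G$ of type $D_n$ with $n$ even, the center of $\mathrm{Spin}_{2n}$ is $(\ZZ/2)^2$, and its three non-trivial characters correspond to the minuscule fundamental weights $\omega_a,\omega_b,\omega_c$ (the vector and two half-spin weights), satisfying $\omega_a+\omega_b+\omega_c\equiv 0$ modulo the root lattice. For a $PGO$-torsor $\xi$ these weights pair with the three Tits algebras of indices $2^{\ia}\leq 2^{\ib}\leq 2^{\ic}$. The central input, following \cite{Z12}, is that $2^{\mathrm{i}_j}\zeta_j$ with $\zeta_j := c(\omega_j)$ lies in the image of $K_0(X)$, so that multiplication by $2^{\ic}$ annihilates any degree-two expression in which each factor $\zeta_j$ is already equipped with the coefficient $2^{\mathrm{i}_j}$. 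The restriction to $n$ even enters exactly through the shape of the center: for $n$ odd the center is $\ZZ/4$ and the three-fold relation among minuscule weights disappears.

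Third, I would build $\tau$ as an integer combination of the products $\zeta_a\zeta_b,\zeta_a\zeta_c,\zeta_b\zeta_c$, tuned so that $\tau\in \gamma^2_\xi\mathfrak{G}_s$ and $2^{\ic}\tau$ vanishes in $\gamma^2\mathfrak{G}_s$ modulo $\gamma^3$. The congruence $\zeta_a+\zeta_b+\zeta_c\in \gamma^2\mathfrak{G}_s$ coming from $\omega_a+\omega_b+\omega_c\equiv 0$ provides the cancellation that pushes a carefully chosen $\tau$ from $\gamma^1\cdot \gamma^1$ into genuine $\gamma^2$; the precise coefficients are dictated by requiring membership in $\igtwist$ for $i=2$, which amounts to a finite linear algebra problem over $\ZZ$ using the description of $\gamma^2_\xi$ from \cite{Z12}.

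The main obstacle is verifying non-triviality of $\tau$ in $\gamma^2_\xi\mathfrak{G}_s/\gamma^3_\xi\mathfrak{G}_s$. This demands an explicit presentation of $\gamma^*\mathfrak{G}_s$ in low degrees together with the subgroup $\gamma^2_\xi$ generated by the Tits-class contributions determined by $\xi$. The heart of the argument is to rule out any rewriting $\tau=2^{\ic}\eta+(\text{terms in }\gamma^3)$ with $\eta\in\gamma^2\mathfrak{G}_s$: the inequality $\ia+\ib>\ic$ is exactly the $2$-adic valuation obstruction preventing such an absorption, since the next available factor from combining the smaller Tits classes is $2^{\ia+\ib}$, which by hypothesis cannot cancel a denominator of $2^{\ic}$. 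The side condition $\ic\geq 2$ ensures $\tau$ sits in $\gamma^2$ rather than collapsing into $\gamma^3$, where the Riemann--Roch comparison with $\CH^2$ no longer applies in the desired way.
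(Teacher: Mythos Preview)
Your proposal has a genuine gap in the logical reduction, together with a misidentification of the object $\mathfrak{G}_s$.

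First, $\mathfrak{G}_s$ is \emph{not} $K_0(X_s)$: in \cite{Z12} and in this paper it is the small quotient ring $\ZZ[\Lambda/T^*]/\overline{\ker(\cc)}$, here isomorphic to $\ZZ[y_1,y_2]$ modulo a few relations. The map $q\circ\res\colon K_0(X)\to\mathfrak{G}_s$ is a drastic collapse, and the twisted $\gamma$-filtration is used only as a \emph{detection} device. The surjection $\gamma^{2/3}K_0(X)\twoheadrightarrow\gamma^{2/3}_\xi\mathfrak{G}_s$ lets you conclude that a class in $\gamma^2K_0(X)$ is nonzero modulo $\gamma^3K_0(X)$ if its image is nonzero; it does \emph{not} let you conclude that a lift of a torsion element downstairs is torsion upstairs. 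So your sentence ``it therefore suffices to exhibit $\tau\in\gamma^2_\xi\mathfrak{G}_s$ that is torsion \ldots'' is not a valid reduction. The paper instead builds the element $\mu$ directly in $\gamma^2K_0(X)$, and the crucial ingredient you are missing is that for each minuscule weight $\lambda$ one has $2\lambda\in\Lambda_r=T^*$, so $g^2=[\mathcal{L}(2\lambda)]$ lies in $\cc(\ZZ[T^*])\subset K_0(X)$ itself (not merely in $K_0(X_s)$). This is what allows one to write down $c_1(g_a^2)c_1(g_b^2)\in\gamma^2K_0(X)$ and then correct it by second Chern classes $c_2(2^{\ic}g_\bullet)$ to obtain an explicit $\mu\in\gamma^2K_0(X)$ whose restriction visibly lands in $\gamma^3K_0(X_s)$.

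Second, you have the roles of the two numerical hypotheses reversed. The inequality $\ia+\ib>\ic$ is what makes $\mu$ \emph{torsion}: one checks that $2^{\ia+\ib-\ic}\mu$ is an integer combination of products like $\binom{2^{\ia}}{2}\binom{2^{\ib}}{1}c_1(g_a)^2c_1(g_b)$, each of which lies in $\res(\gamma^3K_0(X))$. Non-triviality, by contrast, comes from the bound on the indices by $n$ (equivalently $\ic\le v_2(n)$): one computes $q(\res(\mu))=2^{\ic}y_1y_2$ and shows this is not in $\gamma^3_\xi\mathfrak{G}_s$ precisely under that bound. Your final paragraph inverts this, treating $\ia+\ib>\ic$ as the obstruction to triviality; with that reading the argument cannot close.
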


Furthermore, we may relate this element to a torsion element provided in \cite{Z12} for orthogonal groups, by considering a finite field extension of $k$ over which one of the Tits algebras of $G$ splits. 

This paper is organized as follows. In the first section we recall the definition of the $\gamma$-filtration and provide a description through use of the Steinberg basis and Tits algebras of $G$. In the second and third sections we introduce the twisted $\gamma$-filtration $\gamma^*_\xi\mathfrak{G}_s$, and compute $\gamma^{2/3}_\xi\mathfrak{G}_s$ in the case that $G=PGO_{2n}$ for $n\geq 4$ even. In the final section, we construct an explicit torsion element in $\gamma^2\K0(X)$, provide an example where such an element exists, and finally consider the behaviour of such an element over a finite field extension.

\section{Preliminaries}

\noindent In this section we recall the constructions and results
which will be neccessary for the main theorem. We recall the
definition of Grothendieck's $\gamma$-filtration, introduce the notion
of the Steinberg basis for $K_0$ of a complete flag variety and recall
basic properties of $K_0$ of twisted flag varieties. The reader is advised to consult \cite[Chap. 15]{Fu}, \cite[Chap. 3]{FL} or \cite{GaZ10} for further details.

Consider a smooth projective variety $X$ over a field $k$
and its Grothendieck group $\K0(X)$. For an element $x\in\K0(X)$, let $c(x)=\sum_{i\ge0}c_i(x)$ be the total Chern class of $x$ with values in $\K0(X)$ (cf. \cite{Fu}). We follow the convention that for a line bundle $L$ over $X$,  $c_1([L])=1-[L^\vee]$.

The tensor product of vector bundles endows $\K0(X)$ with a ring structure, and while it does not have a canonical grading, we may impose one through the construction of a filtration. The $\gamma$-filtration is defined using the above Chern classes. We define
$$
\gamma^i\K0(X)=
\langle c_{i_1}(x_1)\cdot \ldots \cdot c_{i_m}(x_m) \mid
i_1+\ldots + i_m\ge i,\; x_l\in \K0(X)\rangle,
$$
and let $\igam=\gamma^i\K0(X)/\gamma^{i+1}\K0(X)$ denote the degree
$i$ component  of the corresponding graded ring (cf. \cite[\S 2.3]{SGA6}).

Consider a split simple linear algebraic group $G_s$ of rank $n$ over
$k$. We fix a split maximal torus $T$ and Borel subgroup $B$ such that
$T\subset B\subset G_s$. Let $X_s=G_s/B$ be the variety of Borel
subgroups. Let $\K0(X_s)$ be the Grothendieck group of $X_s$ and
recall that since $G_s$ is split, $\K0(X_s)$ is generated by line
bundles (see \cite{SGA6}).

Let $\Lambda_r$ and $\Lambda$ be the root and weight lattices of $G_s$ respectively. Let $\{\alpha_1,\dots, \alpha_n\}$ be a set of simple roots and $\{\omega_1, \dots, \omega_n\}$ the respective set of fundamental weights, that is $\alpha_i^\vee(\omega_j)=\delta_{ij}$. The character group $T^*$ of $T$ is an intermediate lattice $\Lambda_r\subseteq T^*\subseteq \Lambda$ which determines the isogeny class of $G_s$. For instance if $T^*=\Lambda_r$ then $G_s$ is adjoint, and if $T^*=\Lambda$ then $G_s$ is simply connected. 

With this, we may define the integral group ring $\ZZ[T^*]$ whose elements are linear combinations $\sum_ia_ie^{\lambda_i}$, for $a_i\in\ZZ$ and $\lambda_i\in T^*$. Let $$
\cc: \ZZ[T^*]\rightarrow \K0(X_s)
$$ 
be the characteristic map, defined by sending $e^\lambda$ to the
associated line bundle $[\mathcal{L}(\lambda)]$ for $\lambda\in
\Lambda$. Note that while $\K0(X_s)$ does not depend on the isogeny
class of $G_s$, the image of this map does. In particular, if $G_s$ is
simply connected then the characteristic map is surjective \cite{SGA6}. 

The Weyl group $W$ of $G_s$ acts on weights via simple reflections $s_{\alpha_i}$, $i=1,\dots,n$. That is $s_{\alpha_i}(\lambda)=\lambda-\alpha_i^\vee(\lambda)\alpha_i$, for $\lambda\in\Lambda$. So, for each element $w\in W$ we may define the weight $\rho_w\in\Lambda$ (cf. \cite[\textsection 2.1]{St75}) as
$$
\rho_w=\sum_{\{i\in 1,\dots,n\mid w^{-1}(\alpha_i)<0\}} w^{-1}(\omega_i).
$$
In particular if $w=s_{\alpha_i}$, then $\rho_w=s_{\alpha_i}(\omega_i)=\omega_i-\alpha_i$. Since $W$ acts trivially on $\Lambda/\Lambda_r$, we have 
$$
\overline{\rho}_w=\sum_{\{i\in 1,\dots,n\mid w^{-1}(\alpha_i)<0\}}\overline{\omega}_i \in \Lambda/T^*,
$$
where $\overline{\rho}_w$ denotes the class of $\rho_w\in \Lambda$ modulo $T^*$.
By the characteristic map of the simply connected cover of $G_s$, we may associate to each $w\in W$ the class of the associated line bundle $g_w:=\cc(e^{\rho_w})=[\LL(\rho_w)]$. These elements form a $\ZZ$-basis of $\K0(X_s)$ called the Steinberg basis.
The Steinberg basis allows a nice description of the $\gamma$-filtration of $X_s$, since we may express generators in terms of only first Chern classes:
$$
\gamma^i\K0(X_s)=\langle c_1(g_{w_1})\cdot \ldots\cdot c_1(g_{w_m}) \mid m\ge i, w_1,\dots,w_m\in W\rangle.
$$
Unfortunately, for a cocyle $\xi\in Z^1(k, G_s)$ the twisted flag variety $X=\XX_s$ is not in general generated by line bundles, and so a description of $\gamma^i\K0(X)$ is not as straightforward.

For a fixed $\xi\in Z^1(k,G_s)$, we can associate to each weight $\overline{\lambda}\in\Lambda/T^*$ a central simple $k$-algebra $A_{\xi,\overline{\lambda}}$, called a Tits algebra of $G_s$ with index $\ind(A_{\xi,\overline{\lambda}})$ (cf. \cite{Ti71}). We define the Tits map 
$$
\beta_\xi:\Lambda/T^*\rightarrow Br(k)
$$
by sending $\overline{\lambda}$ to the class
$[A_{\xi,\overline{\lambda}}]$ in the Brauer group. This map is a group homomorphism for a fixed $\xi$, with $\overline{\lambda_1}+\overline{\lambda_2}\mapsto [A_{\xi,\overline{\lambda_1}}]\otimes[A_{\xi,\overline{\lambda_2}}]$.

Let $X={}_\xi X_s$, let $k_{sep}$ be the separable closure of $X$, and consider the restriction map
$$
\res\colon \K0(X)\to \K0(X\times_k k_{sep})=\K0(X_s),
$$
where we identify $\K0(X\times_k k_{sep})$ with $\K0(X_s)$. 
The main result of \cite{Pa94} says that the image of this restriction map coincides with the sublattice $\langle \ind(A_w) g_w \rangle_{w\in W},$ where $g_w$ is an element of the Steinberg basis and $\ind(A_w)\ge 1$ is the index of the respective Tits algebra $A_{\xi,\overline{\rho_w}}$. Note that if $G_s$ is simply connected the restriction map is an isomorphism.

Since characteristic classes commute with restrictions, the
restriction map $\res\colon \gamma^i\K0(X)\rightarrow \gamma^i\K0(X_s)$ is well defined, and
$$
\res(\gamma^i\K0(X))=\left\langle\prod_{j=1}^m\binom{\ind(A_{w_j})}{n_j}c_1(g_{w_j})^{n_j}\mid n_1+\dots +n_m\ge i, w_j\in W\right\rangle.
$$


\section{The twisted \texorpdfstring{$\boldsymbol{\gamma}$}{gamma}-filtration}

While we have an explicit description of the image $\res(\gamma^{2/3}\K0(X))$ in terms of the Steinberg basis and the indices of the Tits algebras, it is not as practical to work with as one would hope. One problem is that expressing the tensor product of two Steinberg elements as a linear combination of Steinberg elements is a non-trivial task (especially when $W$ is large). 
The twisted $\gamma$-filtration was introduced in \cite{Z12} as a tool for getting identifying elements in $\gamma^{2/3}\K0(X)$ more easily. 

Returning to the characteristic map and the canonical surjection $\Lambda\mapsto \Lambda/T^*$, we have the following diagram, 
$$
\xymatrix{
\ZZ[\Lambda] \ar@{->>}[r]^{\cc} \ar@{->>}[d]	& \K0(X_s) \ar[rd]^q \ar[r]^\simeq	&\ZZ[\Lambda]/\ker(\cc) \ar@{->>}[d]\\
\ZZ[\Lambda/T^*] \ar@{->>}[rr]		&				& \ZZ[\Lambda/T^*]/\overline{\ker(\cc)}
}
$$

which allows us to define the quotient ring
$$
\mathfrak{G}_s:=\ZZ[\Lambda/T^*]/\overline{\ker(\cc)},
$$
and the composite map $q:\K0(X_s)\rightarrow \mathfrak{G}_s$, which is a surjective ring homomorphism. Observe that if $G_s$ is simply connected then $\mathfrak{G}_s\simeq\ZZ$.

By Lemma 3.3 in \cite{Z12} the ideal $\overline{\ker(\cc)}\subset \ZZ[\Lambda/T^*]$ is generated by the elements $d_i(1-e^{\bar{\omega}_i})$, $i=1,\dots,n$, where $d_i$ is the number of elements in the $W$-orbit of the fundamental weight $\omega_i$. 

Consider next the $\gamma$-adic filtration on $\ZZ[\Lambda]$, where the $i$-th power $\gamma^i$ is generated by products of at least $i$ differences. That is, 
$$
\gamma^i=\langle (1-e^{\lambda_1})\dots(1-e^{\lambda_k})\mid k\ge i\rangle.
$$
With this we have $\gamma^i\K0(X_s)=\cc(\gamma^i)$ and we define $\igamG:=q(\gamma^i\K0(X_s)), i\ge 0$.

Given a fixed $G_s$-torsor $\xi$ and the respective twisted form $X={}_\xi X_s$, define the twisted $\gamma$-filtration on $\mathfrak{G}_s$ to be:
$$
\igtwist:=q(res(\gamma^i\K0(X))), i\ge 0,
$$
and let $\gamma^{\it{i/i+1}}_\xi\mathfrak{G}_s=\gamma^{\it i}_\xi\mathfrak{G}_s/\gamma^{\it i+1}_\xi\mathfrak{G}_s$. The associated graded ring $\gamma_\xi^*:=\bigoplus_{i\ge 0}\gamma_\xi^{i/i+1}\mathfrak{G}_s$ is called the $\gamma$-invariant of $\xi$.

Futhermore, there exists a surjective ring homomorphism $\gamma^*(X)\twoheadrightarrow \gamma_\xi^*$, so we may provide a description of the twisted $\gamma$-filtration similar to that in the previous section. By Theorem 4.5 in \cite{Z12},
$$
\igtwist=\left\langle \prod_{j=1}^m \binom{\ind(A_{w_j})}{n_j}(1-e^{-\bar{\rho}_{w_j}})^{n_j}\mid n_1+\dots+n_m\ge i, w_j\in W\right\rangle
$$

\section{Algebras with orthogonal involution}

Let $G_s$ be the adjoint group $PGO^+_{2n}$, for $n\geq 4$ even; ie. $G$ is of type $D_{n}$, $n$ even. In this case,
$\Lambda/T^*=\Lambda/\Lambda_r=\{0, \overline{\omega}_1,
\overline{\omega}_{n-1}, \overline{\omega}_n\}\simeq \ZZ/2\ZZ\oplus
\ZZ/2\ZZ$. That is, $2\bar{\omega}_1=0$,
$\bar{\omega}_1=\bar{\omega}_{n-1}+\bar{\omega}_n$, and
$\bar{\omega}_s=s\bar{\omega}_1$ for $2\leq s\leq n-2$. Let $C_+$ and
$C_-$ denote the Tits algebras corresponding to
$\overline{\omega}_{n-1}$ and $\overline{\omega}_n$. Let $A$ denote
the Tits algebra corresponding to $\overline{\omega}_1$. We note that
$C_+\times C_-$ is the even part of the Clifford algebra of the
algebra with involution $A$, and $[A]+[C_+]+[C_-]=0$ in $Br(k)$ (cf.\cite{INV}).

In order to simplify some calculations, we choose the generators $\sigma_1$ and $\sigma_2$ from $\{\overline{\omega}_1, \overline{\omega}_{n-1}, \overline{\omega}_n\}$ such that $\ind(\beta_\xi(\sigma_1))\leq \ind(\beta_\xi(\sigma_2))\leq \ind(\beta_\xi(\sigma_1+\sigma_2))$. For ease of notation we will define the following non-negative integers:
$$
\ia=v_2(\ind(\beta_\xi(\sigma_1))),\;\;\; \ib=v_2(\ind(\beta_\xi(\sigma_2))),\;\;\; \ic=v_2(\ind(\beta_\xi(\sigma_1+\sigma_2)))
$$
Thus $\ia\leq\ib\leq\ic$, and by the relation in the Brauer group, we have $\ic\leq \ia+\ib$. Then letting $y_1=1-e^{\sigma_1}$ and $y_2=1-e^{\sigma_2}$, we have
$$
\mathfrak{G}_s\cong \ZZ[y_1,y_2]/(y_1^2-2y_1, y_2^2-2y_2, d_a y_1, d_b y_2, d_c(y_1+y_2-y_1y_2)),
$$
where $d_a, d_b, d_c$ are determined by the number of elements in the $W$-orbit of the three corresponding fundamental weights. 

\begin{ex}
If $n=4$ (ie. $G_s=PGO^+_8$), then $d_a=d_b=d_c=2^3$, and so
$$
\mathfrak{G}_s\cong \ZZ[y_1,y_2]/(y_1^2-2y_1, y_2^2-2y_2, 8y_1, 8y_2).
$$
\end{ex}

To compute $\gamma^{2/3}_\xi\mathfrak{G}_s$ we first make note of 4 cases, dependent on the indices of the Tits algebras. The arguments used are independent of the choice of generator, so without loss of generality we consider the generator $y_1$ and its associated index $2^\ia$. 

Consider first the case that the associated Tits algebra is split, ie. $\ia=0$. Then, $y_1^2\equiv 2y_1\in\gamma^2_\xi\mathfrak{G}_s$, and $y_1^3\equiv 4y_1\in\gamma^3_\xi\mathfrak{G}_s$. Thus $\gamma^{2/3}_\xi\mathfrak{G}_s$ has a summand isomorphic to $\ZZ/2\ZZ$. 

Next, suppose $\ia=1$. Then $\binom{2}{2}y_1^2\equiv 2y_1\in\gamma^2_\xi\mathfrak{G}_s$, and $\binom{2}{2}\binom{2}{1}y_1^3\equiv 8y_1\in\gamma^3_\xi\mathfrak{G}_s$. This time, $\gamma^{2/3}_\xi\mathfrak{G}_s$ has a summand isomorphic to $\ZZ/4\ZZ$.

Thirdly, consider the case that $2\leq\ia<v_2(d_a)$.  Then $\binom{2^\ia}{2}y_1^2\equiv 2^\ia y_1\in\gamma^2_\xi\mathfrak(G)_s$, and $\binom{2^\ia}{4}y_1^4\equiv 2^{\ia+1}y_1\in\gamma^4_\xi\mathfrak{G}_s\subseteq \gamma^3_\xi\mathfrak{G}_s$. So, $\gamma^{2/3}_\xi\mathfrak{G}_s$ has a summand isomorphic to $\ZZ/2\ZZ$

Finally, suppose $\ia\ge v_2(d_a))$. In this situation we have $\binom{2^\ia}{r}y_1^r\mid 2^\ia y_1\equiv 0$ for all $r\geq 2$, and so the summand is trivial.

For $y_1+y_2-y_1y_2$, the results are slightly different for $\ic\geq 2$. We have $2^\ic (y_1+y_2-y_1y_2)\in\gamma^2\mathfrak{G}_s$ as before; however, in $\gamma^4\mathfrak{G}_s$ we have the additional element ${2^\ia}{2}\binom{2^\ib}{2}y_1^2y_2^2\equiv 2^{\ia+\ib}y_1y_2$. By the relations in the Brauer group, $\ia+\ib\ge \ic$, and so we have a non-trivial multiple of $y_1+y_2-y_1y_2$ only if this inequality is strict, or if $\ia=0$. In the specific case that $\ia=\ib=1, \ic=2$, we have $y_1^2y_2^2\equiv 4y_1y_2\in\gamma^3\mathfrak{G}_s$, but $4y_1, 4y_2\notin\gamma^3\mathfrak{G}_s$. 

We may summarize this as follows. 

\begin{lem}
Let $\gamma^{2/3}_\xi\mathfrak{G}_s=G_a\oplus G_b\oplus G_c$. Then we have
\begin{align*}
G_a\simeq \begin{cases}
	\ZZ/2\ZZ	&\text{if $\ia=0$}\\
	\ZZ/4\ZZ	&\text{if $\ia=1$}\\
	\ZZ/2\ZZ	&\text{if $2\leq \ia<v_2(d_a)$}\\
	0		&\text{if $\ia\geq v_2(d_a)$}
		\end{cases}
\;\;\;\;\;\;\;G_b\simeq \begin{cases}
	\ZZ/2\ZZ	&\text{if $\ib=0$}\\
	\ZZ/4\ZZ	&\text{if $\ib=1$}\\
	\ZZ/2\ZZ	&\text{if $2\leq \ib<v_2(d_b)$}\\
	0		&\text{if $\ib\geq v_2(d_b)$}
		\end{cases}\\
\;\;\;G_c\simeq \begin{cases}
	\ZZ/2\ZZ	&\text{if $\ic=0$}\\
	\ZZ/4\ZZ	&\text{if $\ic=1$}\\
	\ZZ/2\ZZ &\text{if $\ic=2, \ia=\ib=1$}\\
	\ZZ/2\ZZ	&\text{if $2\leq\ic<v_2(d_c), \ia+\ib<\ic$}\\
	0	&\text{if $2\leq \ic<v_2(d_c), \ic=\ia+\ib$}\\
	0		&\text{if $\ic\geq v_2(d_c)$}
		\end{cases}\\
\end{align*}
\end{lem}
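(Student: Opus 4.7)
The plan is to compute $\gamma^2_\xi\mathfrak{G}_s$ and $\gamma^3_\xi\mathfrak{G}_s$ explicitly using the formula for the twisted $\gamma$-filtration recalled in the previous section, then to read off the quotient $\gamma^{2/3}_\xi\mathfrak{G}_s$ case by case; the per-case arithmetic has largely been set up in the paragraphs immediately preceding the statement, so the proof is in essence organizational. As a preliminary step I would observe that $\mathfrak{G}_s$ is generated as a $\ZZ$-module by $\{1, y_1, y_2, y_1 y_2\}$ via the relations $y_k^2 = 2y_k$, and that every element of $\gamma^{\ge 1}_\xi\mathfrak{G}_s$ lies in the $\ZZ$-span of $\{y_1, y_2, y_1 y_2\}$. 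The summands $G_a, G_b, G_c$ then correspond respectively to the classes in $\gamma^{2/3}_\xi$ of multiples of $y_1$, $y_2$, and $y_1 y_2$ in this basis; the direct sum decomposition has to be checked in view of the coupling relation $d_c(y_1+y_2-y_1 y_2)=0$, but within the index ranges considered this does not generate additional identifications at the level of $\gamma^{2/3}$.

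For $G_a$ I would restrict the description of $\gamma^i_\xi\mathfrak{G}_s$ to products involving only $y_1$, which have the shape $\binom{2^\ia}{k} y_1^k \equiv 2^{k-1}\binom{2^\ia}{k}\, y_1$. Using the standard identity $v_2\!\left(\binom{2^\ia}{k}\right) = \ia - v_2(k)$ for $1 \le k \le 2^\ia$, the minimal 2-adic valuation of the coefficient is $\ia$ at $k = 2$ (giving $\gamma^2_\xi\cap \ZZ y_1$) and $\ia+1$ at $k = 4$ (giving the relevant piece of $\gamma^3_\xi\cap\ZZ y_1$, once $\ia\ge 2$); combining with $d_a y_1 = 0$ then yields the four listed cases, with $\ia = 0, 1$ handled by elementary direct computation where the binomial identity collapses and $\ia \geq v_2(d_a)$ being trivial. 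By symmetry, $G_b$ follows from the identical analysis with $\ib$ in place of $\ia$.

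The main subtlety lies in $G_c$. Contributions to $\ZZ y_1 y_2$ in $\gamma^i_\xi\mathfrak{G}_s$ come from two kinds of generators: pure products $\binom{2^\ic}{k}(y_1 + y_2 - y_1 y_2)^k$, whose $y_1 y_2$-coefficient behavior parallels the $G_a$ analysis with $\ic$ in place of $\ia$; and mixed products involving both $y_1$ and $y_2$, most notably $\binom{2^\ia}{1}\binom{2^\ib}{1}\, y_1 y_2 = 2^{\ia+\ib} y_1 y_2 \in \gamma^2_\xi$ and $\binom{2^\ia}{2}\binom{2^\ib}{2}\, y_1^2 y_2^2 \equiv 2^{\ia+\ib} y_1 y_2 \in \gamma^4_\xi \subseteq \gamma^3_\xi$. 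When $\ia + \ib > \ic$ the pure-$\ic$ generator dominates and $G_c$ matches the $G_a$ pattern with $\ic$ throughout; when $\ia + \ib = \ic$ the mixed contribution in $\gamma^3_\xi$ exactly absorbs the pure $\gamma^2_\xi$ generator, forcing $G_c = 0$ — except in the sporadic regime $\ia = \ib = 1$, $\ic = 2$, where the coincidence $\binom{2}{2} = 1$ leaves one factor of $2$ uncancelled and yields $\ZZ/2$. The hard part is precisely this last bookkeeping: verifying that no further mixed contribution and no exploitation of the coupling relation $d_c(y_1+y_2-y_1 y_2) = 0$ yields additional 2-adic cancellation beyond those catalogued above.
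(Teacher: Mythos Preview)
Your proposal is correct and follows essentially the same approach as the paper: the paper's argument is precisely the case-by-case analysis you describe (carried out in the paragraphs immediately preceding the lemma, which is then stated as a summary), computing for each generator the minimal $2$-adic coefficient appearing in $\gamma^2_\xi$ versus $\gamma^3_\xi$ via $\binom{2^{\mathrm i}}{k}y^k$ and, for $G_c$, comparing the pure $\ic$-contribution against the mixed $\binom{2^\ia}{2}\binom{2^\ib}{2}y_1^2y_2^2$ term. Your version is a bit more systematic (invoking the general identity $v_2\binom{2^{\mathrm i}}{k}=\mathrm i-v_2(k)$ and explicitly flagging the coupling relation $d_c(y_1+y_2-y_1y_2)=0$ as a point requiring care in the direct-sum decomposition), but the content is the same.
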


\begin{ex}
Let $G_s=HSpin_{2n}$ be a half-spin group of rank $n\ge 4$. Then,
$G_s$ is of type $D_n$ for $n$ even, and we have $\Lambda/T^*=\langle
\sigma\rangle$ where $\sigma$ is of order 2 \cite{INV}.

This corresponds to taking the quotient of $\Lambda/\Lambda_r\simeq\langle \sigma_1\rangle\oplus\langle\sigma_2\rangle$ modulo one of the generators, eg. $\sigma_2\equiv 0$. In this situation, we have
$$
\mathfrak{G}_s\simeq \ZZ[y_1]/(y_1^2-2y_1, dy_1),
$$
where $d=2^{v_2(n)+1}$. Thus we may describe the twisted $\gamma$-filtration of $G_s$ by taking the quotient of our previous description of $\gamma^{2/3}\mathfrak{G}_s$ for $G_s=PGO_{2n}^+$ by  $\langle 2^{v_2(n)+1}y_1, y_2\rangle$. So, 
$$
\gamma^{2/3}\mathfrak{G}_s\simeq\begin{cases}
	\ZZ/2\ZZ  &\text{ if $\ia=0$}\\
	\ZZ/4\ZZ  &\text{ if $\ia=1$}\\
	\ZZ/2\ZZ &\text{ if $2\leq \ia\leq v_2(n)$}\\
	0		&\text{ if $]ia> v_2(n)$}
	\end{cases}
$$
Note that this corresponds to the result given in \cite[Example 4.8]{Z12}.
\end{ex}


\section{Construction of a torsion element}

In this section, we provide an explicit torsion element in
$\gamma^{2/3}\K0(X)$ using the twisted $\gamma$-filtration.
Namely, we prove the following

\begin{thm}
Assume that $0<\ia\leq \ib\leq \ic\leq v_2(n)$, and that $\ia+\ib>\ic\geq 2$. Then there exists a non-trivial torsion element of order dividing $2^{\ia+\ib-\ic}$ in $\gamma^{2/3}\K0(X)$. Furthermore, its image in $\gamma_\xi^{2/3}\mathfrak{G}_s$ via $q$ is non-trivial, and its image in $\gamma^{2/3}\K0(X_s)$ via $\res$ is trivial.
\end{thm}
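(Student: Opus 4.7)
The strategy is to lift the non-trivial generator $2^\ic y_3$ of the $G_c\simeq\ZZ/2\ZZ$-summand of $\gamma^{2/3}_\xi\mathfrak{G}_s$ (which exists precisely under the hypotheses $\ia+\ib>\ic\ge 2$, by the previous Lemma) to an explicit element $\tau\in\gamma^2\K0(X)$, and then to verify three things: (a) $\res(\tau)\in\gamma^3\K0(X_s)$, (b) $q(\res(\tau))\notin\gamma^3_\xi\mathfrak{G}_s$, and (c) $2^{\ia+\ib-\ic}\tau\in\gamma^3\K0(X)$. Since $\res\colon\K0(X)\hookrightarrow\K0(X_s)$ is injective by Panin's theorem, all constructions and computations will be performed inside the sublattice $\res(\gamma^j\K0(X))\subset\K0(X_s)$, using the explicit description of its generators recalled in the preliminaries.

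\textbf{Construction.} Choose Weyl elements $w_1,w_2,w_3\in W$ whose classes $\bar\rho_{w_i}\in\Lambda/T^*$ are $\sigma_1,\sigma_2,\sigma_1+\sigma_2$ respectively, and set $\ell_i=c_1(g_{w_i})$. The natural starting point is $\tau_0:=\binom{2^\ic}{2}\ell_3^2\in\res(\gamma^2\K0(X))$, whose image under $q\circ\res$ is $2^\ic(2^\ic-1)y_3$, the $G_c$-generator. To achieve (a) one subtracts a correction $\tau'\in\res(\gamma^2\K0(X))$ having the same class as $\tau_0$ in $\gamma^{2/3}\K0(X_s)$. The correction is built from generators such as $\binom{2^\ia}{2}\ell_1^2$, $\binom{2^\ib}{2}\ell_2^2$, $2^{2\ia}\ell_1^2$, $2^{\ia+\ib}\ell_1\ell_2$ and $2^\ia\ell_1\cdot 2^\ic\ell_3$, with integer coefficients dictated by the Steinberg-basis expansion $g_{w_3}=g_{w_1}\otimes g_{w_2}\otimes[\LL(t)]$ for some $t\in T^*$ together with the tensor-product formula $c_1(L\otimes M)=c_1(L)+c_1(M)-c_1(L)c_1(M)$ applied to $\ell_3^2$. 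Set $\tau:=\tau_0-\tau'$.

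\textbf{Verification of (a), (b), (c).} Condition (a) holds by construction. For (b), track the $y_3$-component of $q(\res(\tau))$: the $\tau_0$-term contributes $2^\ic(2^\ic-1)y_3$, of $2$-adic valuation exactly $\ic$, while every $y_3$-contribution from $\tau'$ (arising through the identity $y_1y_2=y_1+y_2-y_3$) has coefficient divisible by $2^{\ia+\ib}$; since $\ia+\ib>\ic$, these cannot cancel the leading $y_3$-term, so $q(\res(\tau))$ has $y_3$-coefficient of $2$-adic valuation exactly $\ic$, which is strictly less than the $\ic+1$ needed to lie in $\gamma^3_\xi\mathfrak{G}_s$. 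For (c), multiplying $\tau$ by $2^{\ia+\ib-\ic}$ increases the $2$-adic valuation of every coefficient in $\res(\tau)$ enough to express the result as a sum of $\res(\gamma^3\K0(X))$-generators, using $v_2\binom{2^n}{k}=n-v_2(k)$ together with the Brauer-relation inequality $\ia+\ib\ge\ic$.

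\textbf{Main obstacle.} The technical heart is the first step, choosing $\tau'$ so that all three conditions (a)--(c) hold at once. The naive choice $\tau'=\binom{2^\ic}{2}(\ell_1+\ell_2-\ell_1\ell_2)^2$ does not lie in $\res(\gamma^2\K0(X))$: its $\ell_1\ell_2$-coefficient $2\binom{2^\ic}{2}=2^\ic(2^\ic-1)$ fails to be divisible by $2^{\ia+\ib}$ precisely when $\ia+\ib>\ic$. Hence the correction must mix distinct generators with finely tuned integer coefficients, leveraging both the Brauer relation $\ia+\ib\ge\ic$ (from $[A]+[C_+]+[C_-]=0$ in $Br(k)$) and the strict inequality $\ia+\ib>\ic$, which together guarantee simultaneously the non-triviality in $\gamma^{2/3}_\xi\mathfrak{G}_s$ and the order bound $2^{\ia+\ib-\ic}$ in $\gamma^{2/3}\K0(X)$.
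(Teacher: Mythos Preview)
Your plan correctly identifies the overall strategy and, in the ``Main obstacle'' paragraph, even pinpoints exactly where the difficulty lies: producing inside $\res(\gamma^2\K0(X))$ a class whose $\ell_1\ell_2$–coefficient has $2$-adic valuation only $\ic$, while every Panin-type cross term $2^{\ia}\ell_1\cdot 2^{\ib}\ell_2$ already carries valuation $\ia+\ib>\ic$. But you then stop short of resolving this obstacle. The phrases ``integer coefficients dictated by\ldots'' and ``finely tuned integer coefficients'' are placeholders; no $\tau'$ is actually written down, and without it none of (a), (b), (c) can be checked. As it stands the proposal is a plan, not a proof.

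The paper's proof resolves precisely this obstacle by bringing in an ingredient you do not invoke: by \cite[Cor.~5.5]{GZ12} one has $\cc(\ZZ[T^*])\subset \K0(X)$, so for any $\lambda$ with $\bar\lambda\in\{\sigma_1,\sigma_2,\sigma_1+\sigma_2\}$ the square $g^2=[\LL(2\lambda)]$ is an honest line bundle over $X$ (since $2\lambda\in T^*=\Lambda_r$), and hence $c_1(g_a^2),\,c_1(g_b^2)\in\gamma^1\K0(X)$ with \emph{no} index coefficient attached. One then gets
\[
c_1(g_a^2)c_1(g_b^2)\equiv 4\,c_1(g_a)c_1(g_b)\pmod{\gamma^3\K0(X_s)}
\]
inside $\gamma^2\K0(X)$ for free, and $2^{\ic-2}$ times this gives the required cross term of valuation exactly $\ic$. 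The explicit element is
\[
\mu=2^{\ic-2}c_1(g_a^2)c_1(g_b^2)+c_2(2^{\ic}g_a)+c_2(2^{\ic}g_b)-c_2(2^{\ic}g_c),
\]
for which (a), (b), (c) are then short direct computations. Your framework, which restricts to Panin generators $\prod_j\binom{\ind(A_{w_j})}{n_j}c_1(g_{w_j})^{n_j}$, does not by itself supply such an element: the cross terms you list all have $2$-adic valuation at least $\ia+\ib$, and the needed ``split'' contribution $c_1(g_a^2)c_1(g_b^2)$ does not appear among them. You do mention the factor $[\LL(t)]$ with $t\in T^*$, which is morally the same input, but you never use it to produce a concrete $\tau'$. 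To complete your argument you would either need to import the \cite{GZ12} fact explicitly, or else exhibit an integer combination of your listed generators whose image in $\gamma^{2/3}\K0(X_s)$ is a unit multiple of $2^{\ic}\ell_1\ell_2$; you have done neither.
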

\begin{proof}
By \cite[Cor. 5.5]{GZ12}, $\cc(\ZZ[T^*])\subset \K0(X)$, so we begin by considering classes of bundles lying in this image.

Denote by $g_a, g_b, g_c$ classes of line bundles $\LL(\lambda)$, where $\bar{\lambda}=\sigma_1, \sigma_2$ and $\sigma_1+\sigma_2$ respectively. Then, $2\lambda\in T^*$ in each case and so $c_1(g^2)=c_1([\LL(2\lambda)])\in\gamma^1\K0(X)$ for $g=g_a, g_b$ and $g_c$. In particular, the following element then lies in $\gamma^2\K0(X)$:
\begin{align*}
c_1(g_a^2)c_1(g_b^2)=4c_1(g_a)c_1(g_b)-2c_1(g_a)c_1(g_b)^2-2c_1(g_a)^2c_1(g_b)+c_1(g_a)^2c_1(g_b)^2\\
=c_1(g_a)^2c_1(g_b)^2+2c_1(g_a)c_1(g_b)c_1(g_c)-2c_1(g_a)^2-2c_1(g_b)^2+2c_1(g_c)^2.
\end{align*}
By the retriction map, $c_2(i_w g_w)=\binom{i_w}{2}c_1(g_w)^2\in\gamma^2\K0(X)$ for all $w\in W$. So, under the hypothesis that $\ic\ge 2$, we may consider the following element in $\gamma^2\K0(X)$:
$$
\mu=2^{\ic-2}c_1(g_a^2)c_1(g_b^2)+c_2(2^\ic g_a)+c_2(2^\ic g_b)-c_2(2^\ic g_c)\in \gamma^2\K0(X).
$$
Note that we may permute the line bundles $g_a, g_b, g_c$ in the definition of $\mu$ (but keep the coefficients as $2^{\ic-2}$ and $2^\ic$ respectively), and we will still have an element of $\gamma^2\K0(X)$. In fact, all subsequent arguments are valid for such permutations as well.

By Chern class operations and the relation $c_1(g_c)=c_1(g_a)+c_1(g_b)-c_1(g_a)c_1(g_b)$,
\begin{align*}
\mu&=2^{\ic-2}c_1(g_a^2)c_1(g_b^2)+2^{\ic-1}c_1(g_a)^2+2^{\ic-1}c_1(g_b)^2-2^{\ic-1}c_1(g_c)^2\\
&=2^{\ic-1}c_1(g_a)^2c_1(g_b)+2^{\ic-1}c_1(g_a)c_1(g_b)^2-2^{\ic-2}c_1(g_a)^2c_1(g_b)^2.
\end{align*}
From this, it is clear that $\res(\mu)\in \gamma^3\K0(X_s)$. Next we apply the map $q$, 
\begin{align*}
q(\res(\mu))&=2^{\ic -2}y_1^2y_2^2+2^{\ic-1} y_1y_2(y_1+y_2-y_1y_2)
	=2^\ic y_1y_2.
\end{align*}
By the hypothesis that $\ic\leq v_2(n)$, we have $2^\ic y_1y_2\notin\gamma^3_\xi\mathfrak{G}_s$, and hence $\mu\notin\gamma^3\K0(X)$.

Finally, we must check that $\mu$ is torsion. Consider the following elements, which by definition lie in $\res(\gamma^3\K0(X))$:
\begin{align*}
\binom{2^\ia}{2}\binom{2^\ib}{1}c_1(g_a)^2c_1(g_b)&=2^{\ia+\ib-1}c_1(g_a)^2c_1(g_b)\\
\binom{2^\ia}{1}\binom{2^\ib}{2}c_1(g_a)c_1(g_b)^2&=2^{\ia+\ib-1}c_1(g_a)c_1(g_b)^2\\
\binom{2^\ia}{2}\binom{2^\ib}{2}c_1(g_a)^2c_1(g_b)^2&=2^{\ia+\ib-2}c_1(g_a)^2c_1(g_b)^2.
\end{align*}
Thus, 
$$
2^{\ia+\ib-\ic}\mu=2^{\ia+\ib-1}c_1(g_a)^2c_1(g_b)+2^{\ia+\ib-1}c_1(g_a)c_1(g_b)^2-2^{\ia+\ib-2}c_1(g_a)^2c_1(g_b)^2.
$$
\end{proof}

\begin{ex}
Consider again the case that $n=4$. Then, the indices which saitsfy the neccessary hypotheses are $1\leq \ia \leq 2$, $\ib=\ic=2$. So, we have
\begin{align*}
\mu&=c_1(g_a^2)c_1(g_b^2)+c_2(4g_a)^2+c_2(4g_b)-c_2(4g_c)\\
&=2c_1(g_a)^2c_1(g_b)+2c_1(g_a)c_1(g_b)^2-c_1(g_a)^2c_1(g_b)^2.
\end{align*}
Now if $\ia=1$, we have $2^{\ia+\ib-\ic}\mu=2\mu\in\gamma^3\K0(X)$ by the above argument. If instead $\ia=2$, then $2^{\ia+\ib-\ic}\mu=4\mu\in\gamma^3\K0(X)$. In either case $\mu$ is a non-trivial torsion element in $\gamma^{2/3}\K0(X)$. 

An explict example can be constructed for the case that $\ia=1$ and $\ib=\ic=2$. We may define $G=PGO^+(A, \sigma)$ of type $D_4$, by the algebra with involution $(A, \sigma)$, where $(B, \tau)$ and $(C, \gamma)$ are the two components of the Clifford algebra $\mathcal{C}(A, \sigma)$, each endowed with its canonical involution. It follows by the structure theorems \cite{INV} that both are degree 8 algebras with orthogonal involutions. 
The example we present here was contructed in \cite{QSZ}, using the notion of direct sums of agebras with involution introduced by Dejaiffe (cf. \cite{Dj}).

Let $F=K(x,y,z,t)$ be a function field in 4 variables over a field $K$, and consider the quaternion algebras over $F$ defined by
$$
Q_1=(x, zt), \;\;\; Q_2=(y, zt), \;\;\; Q_3=(xy, t), \;\;\; Q_4=(xy, z).
$$
Let $(A, \sigma)$ be the direct sum of $(Q_1, \bar{ })\otimes(Q_3, \bar{ })$ and $(Q_2, \bar{ })\otimes(Q_4, \bar{ })$. Denote by $(B, \tau)$ the component of $\mathcal{C}(A, \sigma)$ Brauer equivalent to $(Q_1,\bar{})\otimes (Q_3,\bar{})\sim (x, t)\otimes (y, z)$, and by $(C, \gamma)$ the component of $\mathcal{C}(A, \sigma)$ Brauer equivalent to $(Q_1, \bar{})\otimes(Q_4, \bar{})\sim (x, z)\otimes (y, t)$. Thus $\ia=v_2(\ind(A))=1$, $\ib=v_2(\ind(B))=2$ and $\ic=v_2(\ind(C))=2$, as required.
\end{ex}
We consider the behaviour of the torsion element $\mu$ over finite field extensions.

\begin{ex}
Consider a finite field extension $l/k$ such that one of the Tits algebras becomes split, that is either $C_+$ or $C_-$ splits over $l$. We may look at the image of our element $\mu$ under the restriction map $\res_{l/k}:\K0(X)\rightarrow \K0(X\times l)$. 

Suppose that $\sigma_1+\sigma_2=\overline{\omega}_{n-1}$, and that $C_+$ becomes split over $l$. Over this field extension we have $\Lambda/T^*=\{0, \sigma=\overline{\omega}_{n}\}$, where $\overline{\omega}_{n-1}=0$ and $\overline{\omega}_1=\overline{\omega}_{n}$. So, $\Lambda/T^*\simeq \ZZ/2\ZZ$, corresponding to \cite[Example 3.6]{Z12}. Let $g=[\LL(\lambda)]$, where $\overline{\lambda}=\sigma$, and let $\istar=v_2(\ind(\beta_\xi(\overline{\lambda})))$, that is, the 2-adic valuation of the index of the Tits algebra of $G$ over $l$. Since indices of Tits algebras cannot increase over field extensions we have $\istar\leq \ic$. With this notation, we have
$$
\res_{l/k}(c_1(g_a))=\res_{l/k}(c_1(g_b))=c_1(g) \;\;\text{ and } \;\; \res_{l/k}(c_1(g_c))=1.
$$
With this, 
$$\res_{l/k}(\mu)=2^\ic c_1(g)^3-2^{\ic-2}c_1(g)^4=2^{\ic-\istar+1}\eta,
$$
where $\eta=2^{\istar-3}(4c_1(g)^3-c_1(g)^4)$, the torsion element defined in \cite{Z12}. Since $\eta$ is a torsion element of order 2, we observe that $\mu$ becomes trivial over this field extension. 
\end{ex}

{\bf Acknowledgments.} 
I am grateful to my PhD supervisor K. Zainoulline for useful discussions on the subject of this paper. This work has been partially supported by his NSERC Discovery grant 385795-2010, by my NSERC Canadian graduate scholarship, and by a graduate scholarship provided by the German Academic Exchange Service.  My special gratitude is due to N. Semenov and the University of Mainz for their hospitality and support.

\bibliographystyle{plain}

\end{document}